\documentclass[12pt,leqno]{article}
\usepackage[shortlabels]{enumitem}
%% Language and font encodings
\usepackage[utf8]{inputenc} 
\usepackage[T1]{fontenc}
\usepackage{preamble}

\title{Uniform RC-positivity of direct image bundles \thanks{Mathematics Subject Classification 32U05, 32L05, 32J25. \newline Keywords: RC-positivity, uniform RC-positivity, direct images.}
}
\author{Kuang-Ru Wu}

% Page Setup %%%%%%%%%%%%%%%%%%%%%%%%%%%%%%%%%%%%%%%

%\documentclass[12pt,leqno]{amsart}
%\usepackage[dvips]
\usepackage{fullpage}
\usepackage{multicol}

%%%%%%%%%%%%%%%%%%%%%%%%%%%%%%%%%%%%%%%

\begin{document}

\date{}

\parskip=6pt

\maketitle

\begin{abstract}
The concept of RC-positivity and uniform RC-positivity is introduced by Xiaokui Yang to solve a conjecture of Yau on projectivity and rational connectedness of a compact K\"ahler manifold with positive holomorphic sectional curvature. Some main theorems in Yang's proof hold under a weaker condition called weak RC-positivity. It is therefore natural to ask if (uniform) weak RC-positivity implies (uniform) RC-positivity. Another motivation for studying this problem is to understand the relation between rational connectedness of $X$ and (uniform) RC-positivity of the holomorphic tangent bundle $TX$. 

In this paper, we obtain results in this direction. In particular, we show that if a vector bundle $E$ is uniformly weakly RC-positive, then $S^kE\otimes \det E$ is uniformly RC-positive for any $k\geq 0$, and $S^kE$ is uniformly RC-positive for $k$ large. We also discuss an approach that might lead to a solution to the question of whether weak RC-positivity of $E$ implies RC-positivity of $E$.

\end{abstract}

\section{Introduction}

In \cite{YangCamb}, Yang introduces the notion of RC-positivity as a differential geometric counterpart of rational connectedness.  RC-positivity plays a crucial role in Yang's proof of a conjecture of Yau: If a compact K\"ahler manifold has positive holomorphic sectional curvature, then the manifold is projective and rationally connected. A stronger notion called uniform RC-positivity is introduced by Yang in \cite{YangForum} which also can be used to prove the same conjecture of  Yau. For the semipositive case of Yau's conjecture, see \cite{HeierWong,MatsumuraPAMQ,MatsumuraAJM}.

Let us recall the definition of RC-positivity and uniform RC-positivity. Let $E$ be a holomorphic vector bundle of rank $r$ over a compact complex manifold $X$ of dimension $n$. Given a Hermitian metric $H$ on $E$, we denote the Chern curvature of $H$ by $\Theta^H$, which is an $\text{End}E$-valued $(1,1)$-form. We denote by $TX$ the holomorphic tangent bundle of $X$. For a vector $u\in E_t$ and a tangent vector $v\in T_tX$ with $t\in X$, we define the expression $$H(\Theta^H u,u)(v,\bar{v})$$ to be  $\sum_{j,k} H(\Theta^H_{j\bar{k}}u,u)v_j \bar{v}_k$ locally where we write the curvature $\Theta^H=\sum_{j,k}\Theta^H_{j\bar{k}}dt_j\wedge d\bar{t}_k$ and  $v=\sum_j v_j\partial/\partial t_j$. 

\begin{definition} 

A Hermitian metric $H$ on a holomorphic vector bundle $E\to X$ is called RC-positive if for any $t\in X$ and any nonzero $u\in E_t$, there is a nonzero tangent vector $v\in T_tX$ such that $H(\Theta^Hu,u)(v,\bar v)>0$. On the other hand, a Hermitian metric $H$ is called uniformly RC-positive if for any $t\in X$, there is a nonzero tangent vector $v\in T_tX$ such that for any nonzero $u\in E_t$, we have $H(\Theta^H u,u)(v,\bar v)>0$.

A holomorphic vector bundle $E\to X$ is called (uniformly) RC-positive if it admits a (uniformly) RC-positive Hermitian metric.
\end{definition}
It is clear that uniform RC-positivity implies RC-positivity. To motivate the definition of (uniform) weak RC-positivity, let us consider a Hermitian metric $H$ on $E$ and the induced metric $h$ on the line bundle $O_{P(E^*)}(1)$ over the projectivized bundle $P(E^*)$. By a standard computation (for example, see \cite[Formula (4.5)]{YangCamb}), we know that if $(E,H)$ is RC-positive, then the curvature $\Theta$ of $h$ is positive on every fiber and has at least $r$ positive eigenvalues at every point in $P(E^*)$. The existence of such a metric $h$ on $O_{P(E^*)}(1)$ is called weak RC-positivity of $E$  (\cite[Definition 3.3]{YangCamb}).

Similarly, using the same computation, we see that if $(E,H)$ is uniformly RC-positive, then the curvature $\Theta$ of the induced metric $h$ satisfies
\begin{enumerate}[label=\alph*.]
    \item\label{a} $\Theta$ is positive on every fiber. 
    \item $\Theta$ has at least $r$ positive eigenvalues at every point in $P(E^*)$.
    \item For any point $t\in X$, there exists a nonzero tangent vector $v\in T_tX$, such that $\Theta(\Tilde{v},\bar{\Tilde{v}})|_{(t,[\zeta])}>0$ for any lift $\Tilde{v}$ of $v$ to $T_{(t,[\zeta])}P(E^*)$.
\end{enumerate}
Following Yang, we call the existence of such a metric $h$ on $O_{P(E^*)}(1)$ uniform weak RC-positivity of $E$. Note that in the third condition, we consider the lifts to the tangent space $T_{(t,[\zeta])}P(E^*)$ for any point $[\zeta]$ in the fiber $P(E_t^*)$ not just one point $[\zeta]$. The second condition is implied by the first and the third, so we will omit it later on. Let us summarize the definition.
\begin{definition} The bundle $E$ is called weakly RC-positive if there exists a metric $h$ on $O_{P(E^*)}(1)$ with properties a and b. The bundle $E$ is called uniformly weakly RC-positive if there exists a metric $h$ on $O_{P(E^*)}(1)$ with properties a and c.
\end{definition}
In Yang's solution to Yau's conjecture, two main theorems \cite[Theroem 1.3 and Theorem 1.4]{YangCamb}, although formulated in terms of RC-positivity,  hold under weak RC-positivity. So, it is natural to ask if weak RC-positivity of $E$ implies RC-positivity of $E$ (\cite[Question 7.11]{YangCamb} and \cite[Problem 13]{Inayama}). This question has the same flavor as a conjecture of Griffiths \cite{Griff69}: If $E$ is ample, then $E$ is Griffiths positive. For the developments on the Griffiths conjecture, see \cite{Umemura,CampanaFlenner,Berndtsson09,MourouganeTaka,positivityandvanishingthmliu,liu2014curvatures,FengLiuWan,demailly2020hermitianyangmills,pingali2021note,finski2020monge,Finskichara,wu_2022,wupositivelyII,wuIII,Mazhang,lempert2024two,murakami2025analytic,wu2025mean}.

In this paper, we make some progress in this direction. In particular, we prove the following theorem regarding uniform RC-positivity.

\begin{theorem}\label{Thm for E}
    If $E$ is uniformly weakly RC-positive over a compact K\"ahler manifold $X$, then $S^kE\otimes \det E$ is uniformly RC-positive for any $k\geq 0$, and $S^kE$ is uniformly RC-positive for $k$ large.
\end{theorem}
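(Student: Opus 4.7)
The plan is to realize $S^k E \otimes \det E$ as a direct image on the projectivized bundle and apply a Berndtsson-type curvature formula to transfer uniform weak RC-positivity of $E$ to uniform RC-positivity of the direct image. Let $\pi \colon Y := P(E^*) \to X$ be the bundle projection. From the relative Euler sequence $0 \to O_Y \to O_Y(1) \otimes \pi^* E^* \to T_{Y/X} \to 0$ one computes $K_{Y/X} = O_Y(-r) \otimes \pi^* \det E$, so by the projection formula
\[
\pi_* \bigl(O_Y(k+r) \otimes K_{Y/X}\bigr) \cong \pi_* O_Y(k) \otimes \det E = S^k E \otimes \det E.
\]
Equip $L := O_Y(k+r)$ with the metric $h^{k+r}$, where $h$ is the metric on $O_Y(1)$ supplied by the hypothesis. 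Its curvature $(k+r)\Theta$ is positive on each fiber by property a, so $F := \pi_*(L \otimes K_{Y/X})$ carries a well-defined $L^2$-metric $H_F$.

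The main input is a Berndtsson-type curvature formula for $H_F$ of schematic form
\[
\bigl\langle \Theta^{H_F} \xi, \xi \bigr\rangle(v, \bar v) = (k+r) \int_{Y_t} \Theta(\tilde v^{\mathrm H}, \bar{\tilde v}^{\mathrm H})\, |\xi|^2 \;+\; (\text{non-negative remainder}),
\]
for $v \in T_t X$ and nonzero $\xi \in F_t$, where $\tilde v^{\mathrm H}$ denotes the horizontal lift with respect to the Chern connection of $h$. Property c of uniform weak RC-positivity provides for each $t \in X$ a $v \in T_tX$ such that $\Theta(\tilde v, \bar{\tilde v})|_{(t,[\zeta])} > 0$ for every $[\zeta]$ and every lift---in particular the horizontal one. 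The integrand is then strictly positive wherever $\xi \neq 0$, so the integral is strictly positive; combined with the non-negative remainder, $\langle \Theta^{H_F} \xi, \xi\rangle(v, \bar v) > 0$, proving uniform RC-positivity of $S^k E \otimes \det E$.

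For the $S^k E$ claim, observe that the lower bound above scales linearly in $k+r$. Fix any smooth metric $g$ on $\det E^{-1}$ and form $H_F \otimes g$ on $S^k E = F \otimes \det E^{-1}$, whose curvature is $\Theta^{H_F} + \Theta^g \cdot \mathrm{Id}$. For the selected $v$,
\[
\bigl\langle \Theta^{H_F \otimes g} \xi, \xi\bigr\rangle(v, \bar v) \;\geq\; \bigl[(k+r)\, c(t) + \Theta^g(v, \bar v)\bigr] \|\xi\|^2,
\]
where $c(t) > 0$ comes from the pointwise positive lower bound supplied by uniform weak RC. The condition ``$v$ works'' is open in both $t$ and $v$, so each $t$ has a neighborhood on which a single $v$ works with a uniform positive lower bound; compactness of $X$ produces a global $c > 0$, while $\Theta^g(v, \bar v)$ is uniformly bounded. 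Hence for $k$ sufficiently large the right-hand side is positive uniformly in $t$, giving uniform RC-positivity of $S^k E$.

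The main obstacle is establishing the Berndtsson-type formula with a genuinely non-negative remainder---a refinement needed because only fiberwise, rather than total-space, positivity of $\Theta$ is available. Variants appear in work of Berndtsson, Mourougane--Takayama, and Liu--Yang: the fiberwise positivity of $h$ makes both the $L^2$-metric and the Kodaira--Spencer / second-fundamental-form contribution well-behaved, and the remaining terms assemble to something non-negative. The pointwise-to-uniform passage in the last step is then a routine compactness argument exploiting the openness of the uniform weak RC condition.
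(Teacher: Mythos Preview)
Your approach is essentially the paper's: realize $S^kE\otimes\det E$ as $\pi_*(O_{P(E^*)}(k+r)\otimes K_{P(E^*)/X})$, equip the line bundle with the power of the given metric $h$, and apply a Berndtsson-type curvature computation to the $L^2$-metric. The paper's main work (its Section~3) is exactly to carry out the computation you leave schematic: it chooses a representative $\mathbf{u}$ with primitive $\eta_j$ and $\partial^\phi u'=0$ at $t_0$, and then shows that the curvature contracted with $(v_0,\bar v_0)$ splits as a strictly positive ``horizontal'' integral $\int |u_z|^2\bigl(\phi_{1\bar1}-\sum\phi_{1\bar\mu}\phi^{\lambda\bar\mu}\phi_{\lambda\bar1}\bigr)e^{-\phi}$ plus two non-negative remainder terms (one the $\|\cdot\|^2$ piece in its formula~(4.11), one from primitivity of $\eta_1$). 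So your identification of where the difficulty lies is correct, and the paper confirms the remainder is indeed non-negative.

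For $S^kE$ there is a small variation: rather than tensoring the already-constructed metric on $S^kE\otimes\det E$ by an arbitrary metric on $(\det E)^{-1}$ and running a compactness argument as you do, the paper takes $L=O_{P(E^*)}(k)\otimes K_{P(E^*)/X}^{-1}$ with metric $h^k\otimes g$ upstairs and observes that for $k$ large this $L$ again satisfies the hypotheses of its general direct-image theorem, so one applies that theorem directly. Your route is equivalent and your compactness sketch is sound; the paper's is marginally cleaner since it avoids re-estimating the curvature of a tensor product downstairs.
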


 Another motivation for establishing Theorem \ref{Thm for E} is to understand the relation between rational connectedness of $X$ and (uniform) RC-positivity of the holomorphic tangent bundle $TX$. According to Yang \cite[Theorem 1.4]{YangCamb} and \cite[Theorem 1.3]{YangForum}, for a compact K\"ahler manifold $X^n$, if one of the following is true, then $X$ is projective and rationally connected.
\begin{enumerate}
     \item The holomorphic tangent bundle $TX$ is uniformly RC-positive.
     \item The exterior power $\wedge^p TX$ is RC-positive for $1\leq p\leq n$.
 \end{enumerate}
 One can ask if the converse is true (\cite[Problem 4.15]{YangForum}). A partial converse is proved in \cite[Theorem 1.4]{YangForum}: if $X$ is projective and rationally connected, then the line bundle $O_{\wedge^p TX}(-1)$ is RC-positive for $1\leq p \leq n$. So, Theorem \ref{Thm for E} can be viewed as a step towards this converse problem: constructing uniformly RC-positive Hermitian metrics out of metrics on the line bundle $O_{P(E^*)}(1)$.

We also prove a lemma (Lemma \ref{lemma abc} in Section \ref{section ?}) and discuss how a variant of this lemma might lead to a solution to the original question of Yang, namely, weak RC-positivity of $E$ implying RC positivity of $E$.

For the proof of Theorem \ref{Thm for E}, instead of the fibration $p:P(E^*)\to X$, we will work on a more general fibration and prove a general theorem which contains Theorem \ref{Thm for E}
as a special case. We consider a proper holomorphic surjection $p:\mathcal{X}^{n+m}\to Y^m$ between two complex manifolds with $\mathcal{X}$ K\"ahler, $Y$ compact, and the differential $dp$ surjective at every point. We denote the fibers $p^{-1}(t)$ by $\mathcal{X}_t$ for $t\in Y$. Let $(L,h)$ be a Hermitian line bundle over $\mathcal{X}$. Let $$V_t=H^0(\mathcal{X}_t, L|_{\mathcal{X}_t}\otimes K_{\mathcal{X}_t}).$$ We assume that $\dim V_t$ is independent of $t\in Y$. So, the direct image of the sheaf of sections of $L\otimes K_{\mathcal{X}/Y}$ is locally free by Grauert's direct image theorem, where $K_{\mathcal{X}/Y}$ is the relative canonical bundle. We denote by $V$ the associated vector bundle over $Y$. There is a naturally defined Hermitian metric $H$ on $V$. For $u$ in $V_t$ with $t\in Y$, 
\begin{equation}\label{metric}
  H(u,u):=\int_{\mathcal{X}_t}h(u,u).  
\end{equation}
We extend the metric $h$ to act on sections $u$ of $L|_{\mathcal{X}_t}\otimes K_{\mathcal{X}_t}$ so that $h(u,u)$ is an $(n,n)$-form on $\mathcal{X}_t$. In terms of local coordinates, if $u=u'\otimes e$ with $u'$ an $(n,0)$-form and $e$ a frame of $L|_{\mathcal{X}_t}$, then $h(u,u)=c_n u' \wedge \overline{u'} h(e,e)$ where $c_n=i^{n^2}$. Under this more general fibration, we can show 
\begin{theorem}\label{Thm 1}
    If the curvature $\Theta$ of $h$ is positive on every fiber, and for any point $t\in Y$, there exists a nonzero tangent vector $v\in T_tY$, such that $\Theta(\Tilde{v},\bar{\Tilde{v}})|_{(t,z)}>0$ for any lift $\Tilde{v}$ of $v$ to $T_{(t,z)}\mathcal{X}$, then the Hermitian bundle $(V,H)$ is uniformly RC-positive.
\end{theorem}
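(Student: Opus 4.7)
The plan is to derive Theorem~\ref{Thm 1} from Berndtsson's curvature formula \cite{Berndtsson09} for the $L^2$ metric on the direct image bundle $p_*(L \otimes K_{\mathcal{X}/Y})$. Under our hypothesis, both terms that appear in that formula turn out to be nonnegative, and the ``curvature integral'' term is strictly positive along the direction $v$ supplied by the hypothesis.

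I would fix $t_0 \in Y$, let $v \in T_{t_0}Y$ be the nonzero vector from the hypothesis, and take an arbitrary nonzero $u_0 \in V_{t_0}$. Since $V$ is locally free (Grauert), $u_0$ extends to a local holomorphic section $u(t)$ of $V$ on a neighborhood of $t_0$. Fixing a K\"ahler metric on $\mathcal{X}$, let $\tilde v$ denote the orthogonal horizontal lift of $v$ with respect to the induced splitting $T\mathcal{X} = T_{\mathrm{hor}} \oplus T_{\mathcal{X}/Y}$. Berndtsson's formula then expresses the Chern curvature as an identity of the form
\begin{equation*}
\big\langle i\Theta^{V,H}_{v\bar v}\, u_0,\, u_0 \big\rangle_H = \int_{\mathcal{X}_{t_0}} i\Theta(\tilde v, \bar{\tilde v})\, h(u_0, u_0) \;+\; \big\langle (1+\square)^{-1} \bar\partial_V \tilde u,\; \bar\partial_V \tilde u \big\rangle_{L^2(\mathcal{X}_{t_0})},
\end{equation*}
where $\tilde u$ is any smooth extension of $u_0$ to an $L$-valued $(n,0)$-form near $\mathcal{X}_{t_0}$, $\bar\partial_V \tilde u$ is the vertical component of $\bar\partial \tilde u$ restricted to the fiber (an $L|_{\mathcal{X}_{t_0}}$-valued $(n,1)$-form), and $\square$ is the fiberwise $\bar\partial$-Laplacian.

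The second term on the right is manifestly $\geq 0$, since $(1+\square)^{-1}$ is a positive self-adjoint operator. For the first term, the hypothesis guarantees $i\Theta(\tilde v, \bar{\tilde v})|_{(t_0, z)} > 0$ at every $z \in \mathcal{X}_{t_0}$ (the orthogonal horizontal lift being a particular lift of $v$), so the integrand is a smooth, pointwise-nonnegative top-degree form on $\mathcal{X}_{t_0}$ that is strictly positive off the proper analytic subset $\{u_0 = 0\}$. Hence the first integral is strictly positive, and we conclude $\langle i\Theta^{V,H}_{v\bar v} u_0, u_0 \rangle_H > 0$. Since the same $v$ works for every nonzero $u_0 \in V_{t_0}$, and $t_0 \in Y$ was arbitrary, this gives uniform RC-positivity of $(V, H)$.

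The main technical point will be the applicability of Berndtsson's formula in our setting: his original theorem assumes global semipositivity of $\Theta$ on $\mathcal{X}$, whereas we assume only fiber positivity plus positivity along the single direction $v$. The formula itself is an identity that requires no sign hypothesis on $\Theta$; the role of the fiber positivity is to force Kodaira--Nakano vanishing of $H^{n,1}(\mathcal{X}_{t_0}, L|_{\mathcal{X}_{t_0}})$, which makes $\square$ invertible on the relevant space of $(n,1)$-forms and keeps $(1+\square)^{-1}$ well-behaved. The ``for any lift'' clause of the hypothesis is used precisely because the horizontal lift entering Berndtsson's formula depends on a choice of K\"ahler metric on $\mathcal{X}$, and we need the positivity of $i\Theta$ to hold along whatever lift that choice produces.
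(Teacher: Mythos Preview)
Your strategy---Berndtsson's curvature computation for $(V,H)$, with the hypothesis supplying positivity of the relevant terms---is exactly the paper's. The discrepancy is that the curvature identity you quote is not Berndtsson's formula, and the misstatement hides the one step that actually needs work.

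No auxiliary K\"ahler metric on $\mathcal{X}$ enters Berndtsson's computation. The only horizontal/vertical splitting that appears is the one determined by $h$ itself (using that $\Theta$ is fiberwise positive). With the representative $\mathbf{u}=u'\otimes e$ chosen as in \cite[Proposition~4.2]{Berndtsson09} (so that $\eta_j$ is primitive on $\mathcal{X}_{t_0}$ and $\partial^\phi u'=0$ there), the paper obtains
\[
H(\Theta^V u_0,u_0)(v_0,\bar v_0)\;=\;\int_{\mathcal{X}_{t_0}} e^{-\phi}\,|u_z|^2\,\Theta_{\mathcal H}(v_0,\bar v_0)\,c_n\,dz\wedge d\bar z\;+\;(\text{two nonnegative terms}),
\]
where $\Theta_{\mathcal H}$ is the horizontal component of $\Theta$ \emph{with respect to $h$} (locally $\phi_{1\bar 1}-\sum_{\lambda,\mu}\phi_{1\bar\mu}\phi^{\lambda\bar\mu}\phi_{\lambda\bar 1}$). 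The bridge to the hypothesis is Lemma~\ref{lemma assumption}: ``$\Theta(\tilde v,\bar{\tilde v})>0$ for every lift $\tilde v$'' is equivalent to $\Theta_{\mathcal H}(v,\bar v)>0$, because among all lifts the minimum of $\Theta(\tilde v,\bar{\tilde v})$ is attained precisely at the $h$-horizontal lift and equals $\Theta_{\mathcal H}(v,\bar v)$.

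If instead you insert $\Theta(\tilde v,\bar{\tilde v})$ for a K\"ahler-horizontal lift $\tilde v$ into the first term, you have added the nonnegative function $\Theta(\tilde v,\bar{\tilde v})-\Theta_{\mathcal H}(v,\bar v)$ to the integrand; whatever compensating ``second term'' makes your displayed equation an identity can then no longer be nonnegative in general, so the assertion that it is ``manifestly $\geq 0$'' is where the argument breaks. Once the correct formula (with the $h$-horizontal lift) is used, the proof goes through exactly as you outline---and your last paragraph is on point: Berndtsson's computation is an identity requiring no global sign assumption on $\Theta$, with fiber positivity used only to produce the good representative and to make the $\eta$-term have the right sign via primitivity.
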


Actually, the precise statement we prove in Theorem \ref{Thm 1} is: For a fixed point $t_0 \in Y$, if the curvature $\Theta$ of $h$ is positive on the fiber $\mathcal{X}_{t_0}$ , and  there exists a nonzero tangent vector $v\in T_{t_0}Y$, such that $\Theta(\Tilde{v},\bar{\Tilde{v}})|_{(t_0,z)}>0$ for any lift $\Tilde{v}$ of $v$ to $T_{(t_0,z)}\mathcal{X}$, then the Hermitian bundle $(V,H)$ is uniformly RC-positive at $t_0$.

Now, we consider the fibration $p:P(E^*)\to X$, and we assume $X$ is K\"ahler to make sure $P(E^*)$ is K\"ahler (see \cite[Subsection 5.2]{wu2025mean}). Therefore, by Theorem \ref{Thm 1}, we have Theorem \ref{Thm for E}. Indeed, the vector bundle $V$ in Theorem \ref{Thm 1} is associated with the direct image of $L\otimes K_{\mathcal{X}/Y}$. In the present situation, the relative canonical bundle
$K_{P(E^*)/X}$ is isomorphic to $O_{P(E^*)}(-r)\otimes p^*\det E$. If we choose $O_{P(E^*)}(r+k)$ for the line bundle $L$, then $V$ is $S^k\otimes \det E$. On the other hand, if we choose $O_{P(E^*)}(k)\otimes K^{-1}_{P(E^*)/X}$ for $L$, then $V$ is $S^kE$ (we use an arbitrary metric $g$ on $K^{-1}_{P(E^*)/X}$, and the effect of $g$ can be absorbed by taking $k$ large). 

The proof of Theorem \ref{Thm 1} is an adaptation of \cite[Section 3]{wu2025mean}, but we still include the details for completeness  (the original argument is due to Berndtsson in \cite[Section 4]{Berndtsson09} and \cite[Section 2]{BoMathz}. See also \cite{CampanaCaoMihai}).

This paper is organized as follows. In Section \ref{section prelim}, we give a local expression for the assumption (uniform weak RC-positivity) in Theorem \ref{Thm 1} which will be used in the proof of the main theorem. In Section \ref{section proof}, we prove Theorem \ref{Thm 1}. In Section \ref{section ?}, we discuss
a characterization of weak RC-positivity and its possible application.

I would like to thank Shin-ichi Matsumura for bringing to my attention the question of RC-positivity and weak RC-positivity. I am grateful to L\'aszl\'o Lempert, Siarhei Finski, and Xiaokui Yang for their interest in the paper. Thanks are also due to the Erd\H{o}s Center, Budapest for the support.

\section{Preliminary}\label{section prelim}

Let $p:\mathcal{X}^{m+n}\to Y^m$ be the fibration in the introduction, and $(L,h)\to \mathcal{X}$ be the Hermitian line bundle. In this section, we will explain the meaning of the assumption (uniform weak RC-positivity) in Theorem \ref{Thm 1}.

Assume that the Hermitian metric $h$ on the line bundle $L$ has its curvature $\Theta$ positive on every fiber. For such an $h$, we can decompose its curvature $\Theta=\Theta_{\mathcal{H}}+\Theta_\mathcal{V}$ where $\Theta_\mathcal{H}$ is the horizontal component and  $\Theta_\mathcal{V}$ is the vertical component. The horizontal component $\Theta_\mathcal{H}$ can be viewed as an element in $C^{\infty}(\mathcal{X}, p^*(\wedge^{1,1}T^*Y))$.

We denote by $(t_1,\ldots,t_m)$ the local coordinates in $Y$ and by $(t_1,\ldots,t_m,z_1,\ldots, z_n)$ the local coordinates in $\mathcal{X}$, and assume that $h=e^{-\phi}$ locally. We write $\phi_{i\bar{j}}=\phi_{t_i\bar{t}_j}$, $\phi_{\lambda\bar{\mu}}=\phi_{z_\lambda \bar{z}_\mu}$, etc. Since $\Theta$ is positive on each fiber, the matrix $(\phi_{\lambda\bar{\mu}})$ is positive definite, and we denote its inverse by $(\phi^{\lambda\bar{\mu}})$. The horizontal component $\Theta_\mathcal{H}$ and the vertical component $\Theta_\mathcal{V}$ have the local expressions 
\begin{align}
 &\Theta_\mathcal{H}=\sum_{i,j}( \phi_{i\bar{j}}-\sum_{\lambda,\mu} \phi_{i\bar{\mu}}\phi^{\lambda \bar{\mu}}\phi_{\lambda \bar{j}})dt_i\wedge d\bar{t}_j\label{horizontal}\\  &\Theta_\mathcal{V}=\sum_{\lambda,\mu}\phi_{\lambda\bar{\mu}}\delta z_\lambda\wedge \delta \bar{z}_{\mu}\label{vertical}
\end{align}
where $\delta z_\lambda=dz_\lambda+\sum_{i,\mu}\phi^{\lambda\bar{\mu}}\phi_{i\bar{\mu}}dt_i$. That (\ref{horizontal}) and (\ref{vertical}) are independent of local coordinates is discussed in
\cite[Subsection 1.1]{FLWgeodesiceinstein}, and the horizontal component $\Theta_\mathcal{H}$ is called the 
geodesic curvature there.

Recall the assumption in Theorem \ref{Thm 1}: the curvature $\Theta$ of $h$ is positive on every fiber, and for any point $t\in Y$, there exists a nonzero tangent vector $v\in T_tY$, such that $\Theta(\Tilde{v},\bar{\Tilde{v}})|_{(t,z)}>0$ for any lift $\Tilde{v}$ of $v$ to $T_{(t,z)}\mathcal{X}$. We may assume the tangent vector $v$ is $\partial/\partial t_1 $ in the local coordinates above, and we denote the lift by $\Tilde{v}=\partial/\partial t_1+\sum_\lambda a_\lambda \partial/\partial z_\lambda $ with $a_\lambda\in\mathbb{C}$. We then have \begin{equation}
\begin{aligned}
\Theta(\Tilde{v},\bar{\Tilde{v}})&=\phi_{1\bar{1}}+\sum_\mu \phi_{1\bar{\mu}}\overline{a_\mu}+\sum_\lambda \phi_{\lambda\bar{1}}a_\lambda+\sum_{\lambda,\mu}\phi_{\lambda\bar{\mu}}a_\lambda\overline{a_\mu}\\&=\phi_{1\bar{1}}-\sum_{\lambda,\mu}\phi_{1\bar{\mu}}\phi^{\lambda\bar{\mu}}\phi_{\lambda\bar{1}}+\sum_{\lambda,\mu}\phi_{1\bar{\mu}}\phi^{\lambda\bar{\mu}}\phi_{\lambda\bar{1}}+\sum_\mu \phi_{1\bar{\mu}}\overline{a_\mu}+\sum_\lambda \phi_{\lambda\bar{1}}a_\lambda+\sum_{\lambda,\mu}\phi_{\lambda\bar{\mu}}a_\lambda\overline{a_\mu}\\&=\phi_{1\bar{1}}-\sum_{\lambda,\mu}\phi_{1\bar{\mu}}\phi^{\lambda\bar{\mu}}\phi_{\lambda\bar{1}}+\|\sqrt{A^{-1}}\phi_1+\sqrt{A}\bar{a}\|^2,
\end{aligned}   
\end{equation}
 where in the last equality we denote the matrix $(\phi_{\lambda\bar{\mu}})$ by $A$, the column vector $(\phi_{\lambda\bar{1}})$ by $\phi_1$, and the column vector $(a_\lambda)$ by $a$. 
Let us summarize the computation as a lemma. 
\begin{lemma}\label{lemma assumption}
 $\Theta(\Tilde{v},\bar{\Tilde{v}})>0$ for any lift $\Tilde{v}$ if and only if $\phi_{1\bar{1}}-\sum_{\lambda,\mu}\phi_{1\bar{\mu}}\phi^{\lambda\bar{\mu}}\phi_{\lambda\bar{1}}>0$, which is also equivalent to $\Theta_{\mathcal{H}}(\tilde{v},\bar{\Tilde{v}})>0$.   
\end{lemma}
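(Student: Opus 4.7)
The heart of the lemma is the displayed identity
$$\Theta(\tilde v,\bar{\tilde v}) = \Bigl(\phi_{1\bar 1}-\sum_{\lambda,\mu}\phi_{1\bar\mu}\phi^{\lambda\bar\mu}\phi_{\lambda\bar 1}\Bigr) + \bigl\|\sqrt{A^{-1}}\phi_1+\sqrt{A}\bar a\bigr\|^2$$
already obtained immediately above the lemma. The plan is to read the right-hand side as the sum of a term that is independent of the lift (i.e., independent of the vector $a$) and a non-negative term which vanishes for precisely one choice of $a$, namely $\bar a = -A^{-1}\phi_1$. Geometrically this distinguished choice is the horizontal lift of $\partial/\partial t_1$ with respect to the Chern connection of $h$ restricted along the fibers, which explains why the quantity we are isolating should be the horizontal component $\Theta_{\mathcal{H}}$.

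From this split, the first equivalence drops out immediately. If $\phi_{1\bar 1}-\sum_{\lambda,\mu}\phi_{1\bar\mu}\phi^{\lambda\bar\mu}\phi_{\lambda\bar 1}>0$, then adding the non-negative norm-square keeps the total strictly positive for every $a$, hence for every lift $\tilde v$. Conversely, if the first term were $\le 0$, the specific choice $\bar a=-A^{-1}\phi_1$ would produce a lift with $\Theta(\tilde v,\bar{\tilde v})\le 0$, contradicting the hypothesis. So $\Theta(\tilde v,\bar{\tilde v})>0$ for all lifts is equivalent to the strict positivity of $\phi_{1\bar 1}-\sum_{\lambda,\mu}\phi_{1\bar\mu}\phi^{\lambda\bar\mu}\phi_{\lambda\bar 1}$.

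For the second equivalence I will invoke the local formula (\ref{horizontal}) for $\Theta_{\mathcal{H}}$. Since $\Theta_{\mathcal{H}}$ is a section of $p^{\ast}\wedge^{1,1}T^{\ast}Y$, the value $\Theta_{\mathcal{H}}(\tilde v,\bar{\tilde v})$ depends only on $dp(\tilde v)=\partial/\partial t_1$, and (\ref{horizontal}) then shows this value equals precisely $\phi_{1\bar 1}-\sum_{\lambda,\mu}\phi_{1\bar\mu}\phi^{\lambda\bar\mu}\phi_{\lambda\bar 1}$. I do not anticipate any real obstacle: the lemma is essentially a bookkeeping corollary of the calculation already in place, with the only substantive observation being the minimization in $a$ used in the converse direction of the first equivalence.
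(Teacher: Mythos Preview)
Your proposal is correct and follows exactly the paper's approach: the identity displayed just before the lemma already decomposes $\Theta(\tilde v,\bar{\tilde v})$ into the $a$-independent Schur complement plus a non-negative norm-square, and the lemma is recorded as an immediate consequence of that decomposition together with the local formula~(\ref{horizontal}). Your explicit observation that $\bar a=-A^{-1}\phi_1$ realizes the infimum is the only detail the paper leaves tacit.
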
 
 
\section{Proof of Theorem \ref{Thm 1}}\label{section proof} 

Recall the setup: $p:\mathcal{X}^{m+n}\to Y^m$ is a proper holomorphic submersion with $\mathcal{X}$ K\"ahler and $Y$ compact, $(L,h)\to \mathcal{X}$ is a Hermitian line bundle, and $V\to Y$ is a holomorphic vector bundle associated with the direct image $p_*(L\otimes K_{\mathcal{X}/Y})$. The fibers of $V$ are $V_t=H^0(\mathcal{X}_t, L|_{\mathcal{X}_t}\otimes K_{\mathcal{X}_t}).$  The bundle $V$ carries a Hermitian metric $H(u,u)=\int_{\mathcal{X}_t}h(u,u)$.

A smooth local section $u$ of the bundle $V$ is represented by a smooth $(n,0)$-from with values in $L$ over $p^{-1}(W)$ for some open set $W$ in $Y$ such that the restriction to each fiber is holomorphic. Any representative of $u$ is denoted by $\mathbf{u}$. We use $(t_1,\dots, t_m)$ for local coordinates in $Y$. In general, we have $$\bar{\partial}\mathbf{u}=\sum_j d\bar{t}_j\wedge \nu_j+  
 \sum_j \eta_j\wedge dt_j$$ where $\eta_j$ are of bidegree $(n-1,1)$ and $\nu_j$ are of bidegree $(n,0)$ whose restrictions to fibers are holomorphic (thus $\nu_j$ define sections of the bundle $V$). The Hermitian holomorphic vector bundle $(V, H)$ admits the Chern connection $D=D'+D{''}$. The $(0,1)$-part $D^{''}$ is given by $D^{''}u=\sum_j \nu_j d\bar{t}_j$. Therefore, a section $u$ of $V$ is holomorphic if and only if $\bar{\partial}\mathbf{u}=  \sum \eta_j\wedge dt_j$. 
 
 For the $(1,0)$-part $D'$, we consider some local frame $e$ of $L$ and write $\mathbf{u}=u'\otimes e$ with $u'$ an $(n,0)$-form. Denote $h(e,e)=e^{-\phi}$ and define 
 \begin{equation}\label{phi}
     \partial^\phi \mathbf{u}:=(\partial^\phi u')\otimes e = e^\phi \partial(e^{-\phi}{u'})\otimes e 
 \end{equation}
which is an $(n+1,0)$-form with values in $L$. It is straightforward to check that (\ref{phi}) is independent of the choice of the frame $e$. We can write  $\partial^\phi \mathbf{u}=\sum_j dt_j\wedge \mu_j$ where $\mu_j$ are of bidegree $(n,0)$. If we denote by $P(\mu_j)$ the orthogonal projection of $\mu_j$ on the space of holomorphic forms on each fiber, then $D'u=\sum_j P(\mu_j)dt_j$.

Next, we are going to prove $(V,H)$ is uniformly RC-positive. Fix a point $t_0\in Y$. By the assumption in Theorem \ref{Thm 1}, there exists a nonzero tangent vector $v_0\in T_{t_0}Y$ such that for any lift $\Tilde{v}$ of $v_0$ to $T_{(t_0,z)}\mathcal{X}$, we have $\Theta(\Tilde{v},\bar{\Tilde{v}})>0$. So, the goal is to show that the curvature $\Theta^V$ of $(V,H)$ satisfies $H(\Theta^V u_0,u_0)(v_0,\bar{v}_0)>0$ for any nonzero vector $u_0$ in $V_{t_0}$.

We choose a coordinate system $(t_1,\ldots,t_m)$ around the point $t_0$ in $Y$ such that $v_0=\partial/\partial t_1$ at $t_0$. Consider a fixed $u_0\neq 0$ in $V_{t_0}$. A standard argument allows us to extend $u_0$ to a local holomorphic section $u$ of $V$ such that $D'u=0$ at $t_0$ and $u(t_0)=u_0\neq0$. A straightforward computation gives
\begin{equation}\label{standard}   \partial \bar{\partial} H(u,u)=- H(\Theta^V u,u) \text{ at } t_0. 
\end{equation}
On the other hand, if we let $\mathbf{u}$ be a representative of $u$ and write $\mathbf{u}=u'\otimes e$ with $u'$ an $(n,0)$-form and $e$ some local frame of $L$, then 
\begin{equation}
 H(u,u)=p_*(c_n u'\wedge \overline{u'} e^{-\phi})   
\end{equation}
where $e^{-\phi}=h(e,e)$ and $c_n=i^{n^2}$. According to \cite[Proposition 4.2]{Berndtsson09}, we can choose a representative $\mathbf{u}$ such that in $\bar{\partial}\mathbf{u}=  \sum \eta_j\wedge dt_j$, the $\eta_j$ is primitive on $\mathcal{X}_{t_0}$. Moreover, 
$\partial^\phi u'=0$ at $t_0$. After using such a representative, we obtain
\begin{equation}\label{4.4}
\partial\bar{\partial}H(u,u)
 = 
    -c_n p_* ( u'\wedge\overline{u'}\wedge \partial\bar{\partial} \phi  e^{-\phi})
    +
    (-1)^n c_np_*  (\bar{\partial}u'\wedge \overline{\bar{\partial}u'}e^{-\phi}) \text{ at }  t_0.    
\end{equation}
We apply the above $(1,1)$-form to the tangent vector $v_0=\partial/\partial t_1$ and get 
\begin{equation}\label{4.6}
\partial\bar{\partial}H(u,u)(v_0,\bar{v}_0)
 = 
    -c_n p_* ( u'\wedge\overline{u'}\wedge \partial\bar{\partial} \phi  e^{-\phi})(v_0,\bar{v}_0)
    +
    (-1)^n c_np_*  (\bar{\partial}u'\wedge \overline{\bar{\partial}u'}e^{-\phi}) (v_0,\bar{v}_0)\text{ at }  t_0.
\end{equation}
Because $\bar{\partial}\mathbf{u}=  \sum \eta_j\wedge dt_j$ and $\mathbf{u}=u'\otimes e$, we see $\sum \eta_j\wedge dt_j=\bar{\partial}\mathbf{u}=\bar{\partial}u'\otimes e $. If we write $\eta_j=\eta_j'\otimes e$, then $\bar{\partial}u'=\sum \eta_j'\wedge dt_j$. So the last term in (\ref{4.6}) is equal to \begin{equation}\label{4.7}
   (-1)^n c_n\int_{\mathcal{X}_{t_0}}(-1)^{n}\sum \eta'_j\wedge 
\overline{\eta'}_k \wedge dt_j\wedge d\bar{t}_k e^{-\phi}(v_0,\bar{v}_0)=  c_n \int_{\mathcal{X}_{t_0}} \eta'_1\wedge 
\overline{\eta'}_1  e^{-\phi}\leq 0;
\end{equation}
the last inequality is by the fact that the $\eta_1$ is primitive on $\mathcal{X}_{t_0}$.

 We claim that the middle term in (\ref{4.6}) is negative. For the $(n,0)$-form $u'$, we can write locally \begin{equation}
    u'=u_z dz+\sum_ {\lambda, j}u_{z_\lambda t_j}d\hat{z}_\lambda\wedge dt_j+(\text{terms with more than one $t_j$}), 
\end{equation}
where $dz=dz_1\wedge\dots \wedge dz_n$ and $d\hat{z}_\lambda$ means $dz_1\wedge \dots \wedge dz_n$ omitting $dz_\lambda$; here $u_z$ and $u_{z_\lambda t_j}$ simply mean the coefficients, not differentiation.  By a degree count, we see that $-c_n p_* ( u'\wedge\overline{u'}\wedge \partial\bar{\partial} \phi  e^{-\phi})$ is equal to 
\begin{align*}
-c_n\int_{\mathcal{X}_{t_0}}
e^{-\phi} \big(&\sum_{j,k} u_z dz\wedge \overline{u_zdz} \wedge\phi_{j\bar{k}} dt_j\wedge d\bar{t}_k  \\+&\sum_{\lambda,j,k}
u_z dz\wedge 
\overline{u_{z_\lambda t_k}d\hat{z}_\lambda\wedge dt_k}\wedge
\phi_{j\bar{\lambda}} dt_j\wedge d\bar{z}_\lambda \\+&\sum_{\lambda,j,k}
u_{z_\lambda t_j}d\hat{z}_\lambda\wedge dt_j\wedge \overline{u_zdz}\wedge\phi_{\lambda \bar{k}} dz_\lambda \wedge d\bar{t}_k 
\\+
&\sum_{\lambda,\mu,j,k} u_{z_\lambda t_j}d\hat{z}_\lambda\wedge dt_j\wedge \overline{u_{z_\mu t_k}d\hat{z}_\mu\wedge dt_k}\wedge\phi_{\lambda \bar{\mu}} dz_\lambda\wedge d\bar{z}_\mu \big)
\end{align*}
which can be organized as 
\begin{equation}\label{4.8}
\begin{aligned}
-c_n \sum_{j,k}\int_{\mathcal{X}_{t_0}} e^{-\phi}\big(|u_z|^2\phi_{j\bar{k}}+&\sum_{\lambda} (-1)^{n-\lambda+1}u_z \overline{u_{z_\lambda t_k}} \phi_{j\bar{\lambda}}+\sum_{\lambda} (-1)^{n-\lambda+1}\overline{u_z} u_{z_\lambda t_j}\phi_{\lambda\bar{k}}\\+&\sum_{\lambda,\mu} (-1)^{\lambda+\mu}u_{z_\lambda t_j} \overline{u_{z_\mu t_k}}\phi_{\lambda \bar{\mu}} \big) dz\wedge d\bar{z}\wedge dt_j\wedge d\bar{t}_k.
\end{aligned}    \end{equation}
Applying the above $(1,1)$-form to $(v_0,\bar{v}_0)$, we see that $-c_n p_* ( u'\wedge\overline{u'}\wedge \partial\bar{\partial} \phi  e^{-\phi})(v_0,\bar{v}_0)$ is equal to 
\begin{equation}
\begin{aligned}\label{4.9}
-c_n \int_{\mathcal{X}_{t_0}} e^{-\phi}\big(|u_z|^2\phi_{1\bar{1}}+&\sum_{\lambda} (-1)^{n-\lambda+1}u_z \overline{u_{z_\lambda t_1}} \phi_{1\bar{\lambda}}+\sum_{\lambda} (-1)^{n-\lambda+1}\overline{u_z} u_{z_\lambda t_1}\phi_{\lambda\bar{1}}\\+&\sum_{\lambda,\mu} (-1)^{\lambda+\mu}u_{z_\lambda t_1} \overline{u_{z_\mu t_1}}\phi_{\lambda \bar{\mu}} \big) dz\wedge d\bar{z}.
\end{aligned}
\end{equation}
If we denote the matrix $(\phi_{\lambda\bar{\mu}})$ by $A$, the column vector $(\phi_{\lambda\bar{1}})$ by $\phi_1$, and the column vector $((-1)^{n-\lambda+1}u_{z_\lambda t_1})$ by $B_1$, then the expression inside the big parenthesis in (\ref{4.9}) can be written as 
\begin{equation}\label{4.10}
|u_z|^2\phi_{1\bar{1}}-\sum_{\lambda,\mu}\phi_{\lambda\bar{1}}\phi^{\lambda\bar{\mu}}\phi_{1\bar{\mu}}|u_z|^2+\|\sqrt{A^{-1}}\phi_1\overline{u_z}+\sqrt{A}\overline{B_1}\|^2.
\end{equation}
Combining (\ref{4.9}) and (\ref{4.10}), we get
\begin{equation}\label{4.11}
\begin{aligned}
    &-c_n p_* ( u'\wedge\overline{u'}\wedge \partial\bar{\partial} \phi  e^{-\phi})(v_0,\bar{v}_0)\\=&-c_n\int_{\mathcal{X}_{t_0}}e^{-\phi}\big(|u_z|^2(\phi_{1\bar{1}}-\sum_{\lambda,\mu}\phi_{\lambda\bar{1}}\phi^{\lambda\bar{\mu}}\phi_{1\bar{\mu}})+\|\sqrt{A^{-1}}\phi_1\overline{u_z}+\sqrt{A}\overline{B_1}\|^2\big)dz\wedge d\bar{z}. 
\end{aligned}
\end{equation}
Meanwhile, by Lemma \ref{lemma assumption}, the assumption in Theorem \ref{Thm 1} implies that $\phi_{1\bar{1}}-\sum_{\lambda,\mu}\phi_{\lambda\bar{1}}\phi^{\lambda\bar{\mu}}\phi_{1\bar{\mu}}>0$ on the fiber $\mathcal{X}_{t_0}$, so the integrand in (\ref{4.11}) is semipositive on the fiber $\mathcal{X}_{t_0}$. Actually,  the integrand in (\ref{4.11}) is positive somewhere on the fiber $\mathcal{X}_{t_0}$. This is because $u(t_0)\neq 0$, $\mathbf{u}|_{\mathcal{X}_{t_0}} $ cannot be identically zero, hence $u_z$ is nonzero somewhere.

As a consequence, from formula (\ref{4.11}), we deduce that $-c_n p_* ( u'\wedge\overline{u'}\wedge \partial\bar{\partial} \phi  e^{-\phi})(v_0,\bar{v}_0)$ is negative, as claimed. All in all, the right hand side in (\ref{4.6}) is negative, so $  \partial \bar{\partial} H(u,u)(v_0,\bar{v}_0)< 0$ at $t_0$. By (\ref{standard}), we get   $H(\Theta^V u_0,u_0)(v_0,\bar{v}_0)> 0$.

\section{Weak RC-positivity}\label{section ?}
We first give a characterization of weak RC-positivity. It is a variant of \cite[Lemma 18]{AndreottiVesentini}, \cite[Proposition 2.2]{YangCompos}, and \cite[Page 337]{demailly1997complex}. Let $p:\mathcal{X}^{m+n}\to Y^m$ be the fibration with the Hermitian line bundle $(L,h)\to \mathcal{X}$  in the introduction. We consider $\beta\in C^{\infty}(\mathcal{X}, p^*(\wedge^{1,1}T^*Y))$ and call $\beta$ positive if the matrix $(\beta_{i\bar{j}})$ in the local expression  $i\sum_{i,j}\beta_{i\bar{j}}dt_i\wedge d\bar{t}_j$ is positive. 

\begin{lemma}\label{lemma abc}
  Assume the curvature $\Theta$ of $h$ is positive on every fiber $\mathcal{X}_t$. For $1\leq k\leq m$, the following are equivalent. 
\begin{enumerate}[label=\Alph*.]
    \item\label{A} The curvature $\Theta$ has at least $n+k$ positive eigenvalues at every point in $\mathcal{X}$.
    \item\label{B} The horizontal component $\Theta_\mathcal{H}$ has at least $k$ positive eigenvalues at every point in $\mathcal{X}$.
    \item\label{C}  There exists a positive $\beta\in C^{\infty}(\mathcal{X}, p^*(\wedge^{1,1}T^*Y))$ such that the sum of any $m-k+1$ eigenvalues of $\Theta_\mathcal{H}$ with respect to $\beta$ is positive.
\end{enumerate}
Moreover, when $k=1$, statement C can be rephrased as $\Theta_\mathcal{H}\wedge \beta^{m-1}>0$ or equivalently $\Theta^{n+1}\wedge \beta^{m-1}>0$.
\end{lemma}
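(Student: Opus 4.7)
The plan is to establish \ref{A} $\Leftrightarrow$ \ref{B} by a signature argument, \ref{B} $\Leftrightarrow$ \ref{C} by pointwise linear algebra combined with a global smoothing, and to deduce the $k=1$ reformulation via a bidegree computation. In the local coframe $\{\delta z_\lambda, dt_i\}$ of $T^{*(1,0)}\mathcal{X}$ used to define the decomposition $\Theta = \Theta_{\mathcal{H}} + \Theta_{\mathcal{V}}$, the curvature is block diagonal: the vertical block $(\phi_{\lambda\bar\mu})$ is positive definite of size $n$ by hypothesis, and the horizontal block is $\Theta_{\mathcal{H}}$. Since the signature of a Hermitian form is basis-independent, the number of positive eigenvalues of $\Theta$ at a point equals $n$ plus that of $\Theta_{\mathcal{H}}$, which gives \ref{A} $\Leftrightarrow$ \ref{B}.

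For \ref{B} $\Leftrightarrow$ \ref{C} I first argue pointwise. Let $\mu_1 \geq \cdots \geq \mu_m$ be the eigenvalues of $\Theta_{\mathcal{H}}$ with respect to $\beta$. Condition \ref{C} is equivalent to the minimum $(m-k+1)$-fold sum $\mu_k + \cdots + \mu_m$ being positive; this forces $\mu_k > 0$, and since the signature of $\Theta_{\mathcal{H}}$ is independent of the auxiliary positive form used to measure it, we obtain \ref{C} $\Rightarrow$ \ref{B}. Conversely, given \ref{B}, one can at each point shrink a reference positive form along the positive eigenspace of $\Theta_{\mathcal{H}}$, thereby making $\mu_k$ as large as desired while keeping $|\mu_{k+1}|, \ldots, |\mu_m|$ bounded, so that $\mu_k + \cdots + \mu_m > 0$.

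The main obstacle I anticipate is globalizing this pointwise prescription to a single smooth positive $\beta$ on $\mathcal{X}$. Since $\mathcal{X}$ is compact (because $Y$ is compact and $p$ is proper), a partition of unity is available, but patching is delicate because \ref{C} is nonlinear in $\beta$ and the spectral projector of $\Theta_{\mathcal{H}}$ onto its positive eigenspace need not be smooth where eigenvalues coalesce. I plan to bypass this via smooth functional calculus: pick a smooth $g:\mathbb{R} \to (0, \infty)$ that is small on $(-\infty, \epsilon/2)$ and large on $[\epsilon, \infty)$, where $\epsilon > 0$ is a uniform lower bound for the $k$-th eigenvalue of $\Theta_{\mathcal{H}}$ (which exists by \ref{B} and compactness), and then build $\beta$ from a fixed $p^*\omega_Y$ and $g(\Theta_{\mathcal{H}})$ so as to realize the desired pointwise weighting globally and smoothly.

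Finally, for the $k=1$ reformulation, condition \ref{C} reads $\mathrm{tr}_\beta \Theta_{\mathcal{H}} > 0$; the identity $m\,\alpha \wedge \beta^{m-1} = (\mathrm{tr}_\beta \alpha)\,\beta^m$ valid for any Hermitian $(1,1)$-form $\alpha$ on an $m$-dimensional space then gives the equivalence with $\Theta_{\mathcal{H}} \wedge \beta^{m-1} > 0$. To replace $\Theta_{\mathcal{H}}$ by $\Theta^{n+1}$, expand $(\Theta_{\mathcal{H}} + \Theta_{\mathcal{V}})^{n+1}$ binomially: since $\beta^{m-1}$ is horizontal, a bidegree count (horizontal degree at most $m$, vertical degree at most $n$) leaves only the term $(n+1)\,\Theta_{\mathcal{H}} \wedge \Theta_{\mathcal{V}}^n \wedge \beta^{m-1}$, and $\Theta_{\mathcal{V}}^n$ is a positive vertical volume form by the fiberwise positivity of $\Theta$, so $\Theta^{n+1} \wedge \beta^{m-1} > 0$ and $\Theta_{\mathcal{H}} \wedge \beta^{m-1} > 0$ are equivalent.
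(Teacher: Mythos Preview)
Your argument for \ref{A} $\Leftrightarrow$ \ref{B} and for \ref{C} $\Rightarrow$ \ref{B} is exactly the paper's. For \ref{B} $\Rightarrow$ \ref{C} you also land on the paper's strategy: invoke compactness of $\mathcal{X}$ to get uniform eigenvalue bounds, then apply smooth functional calculus to $\Theta_{\mathcal{H}}$ (relative to a fixed reference form) to produce a global smooth $\beta$. The paper sets $\beta=\psi_\varepsilon(\Theta_{\mathcal{H}})$ with $\psi_\varepsilon(x)=x$ for $x\geq A:=\inf_{\mathcal X}\gamma_{m-k+1}$ and $\psi_\varepsilon\equiv B/\varepsilon$ (large) for $x\leq 0$, so that the rescaled eigenvalues $\gamma_j/\psi_\varepsilon(\gamma_j)$ equal $1$ on the top $k$ and lie in $[-\varepsilon,1]$ otherwise. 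Note that your description of $g$ is inverted: if $\beta$ is built from $g(\Theta_{\mathcal{H}})$ in the natural way, the eigenvalues of $\Theta_{\mathcal{H}}$ with respect to $\beta$ are $\gamma_j/g(\gamma_j)$, so to match your own earlier prescription (``shrink $\beta$ along the positive eigenspace'') you need $g$ \emph{small} on $[\epsilon,\infty)$ and \emph{large} on $(-\infty,0]$, not the other way around. This is a slip in bookkeeping, not a gap in the idea.

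Where you genuinely diverge from the paper is the ``moreover'' part. The paper establishes the equivalence of $\Theta_{\mathcal H}\wedge\beta^{m-1}>0$ and $\Theta^{n+1}\wedge\beta^{m-1}>0$ by expanding $\Theta^{n+1}\wedge\beta^{m-1}$ in the $(dt,dz)$ coframe via a Wess--Zumino--Witten--type determinantal identity and then invoking Schur's formula $\det M(i\bar j)=\det(\phi_{\lambda\bar\mu})\bigl(\phi_{i\bar j}-\sum\phi_{i\bar\mu}\phi^{\lambda\bar\mu}\phi_{\lambda\bar j}\bigr)$ to recognize the horizontal component. Your route---binomially expanding $(\Theta_{\mathcal H}+\Theta_{\mathcal V})^{n+1}$ in the adapted coframe $\{\delta z_\lambda,dt_i\}$, where $\Theta$ is block diagonal and $\beta$ is purely horizontal, so that a bidegree count isolates the single term $(n+1)\,\Theta_{\mathcal H}\wedge\Theta_{\mathcal V}^{\,n}$ with $\Theta_{\mathcal V}^{\,n}$ a positive vertical volume---is correct and is more transparent: it avoids the external formula and the Schur computation, at the cost of being less explicit about the constants.
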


\begin{proof}
The equivalence between A and B can be seen by the decomposition $\Theta=\Theta_\mathcal{H}+\Theta_\mathcal{V}$ and formulas (\ref{horizontal}) and (\ref{vertical}). To prove that C implies B, we denote the eigenvalues  of $\Theta_\mathcal{H}$ with respect to $\beta$ by $\gamma_1\leq\ldots \leq \gamma_m$. Since $0<\gamma_1+\cdots+\gamma_{m-k+1}$, we see that $\gamma_{m-k+1}$ must be positive, so $\Theta_\mathcal{H}$ has at least $k$ positive eigenvalues.

To prove that B implies C, we first  fix a positive $\beta_0$ in $C^{\infty}(\mathcal{X}, p^*(\wedge^{1,1}T^*Y))$ and denote the eigenvalues  of $\Theta_\mathcal{H}$ with respect to $\beta_0$ by $\gamma_1\leq\ldots \leq \gamma_m$. We know that $\gamma_{m-k+1}>0$ since there are at least $k$ positive eigenvalues for $\Theta_\mathcal{H}$. We are going to construct $\beta$ using the arguments in \cite[Page 337]{demailly1997complex} (see also \cite[Lemma 18]{AndreottiVesentini} and \cite[Proposition 2.2]{YangCompos}).

Consider the positive numbers $A:=\inf_{\mathcal{X}}\gamma_{m-k+1}>0$,  $B:=\sup_{\mathcal{X}}\max_j |\gamma_j|>0$, and $\varepsilon:=1/(m-k+1)$. Let $\psi_\varepsilon$ be in $C^{\infty}(\mathbb{R},\mathbb{R})$ such that 
\begin{equation}
\psi_\varepsilon(x)=x \text{ for } x\geq A,\,\,\, \psi_\varepsilon(x)\geq x \text{ for } 0\leq x \leq A,\,\,\, \psi_\varepsilon(t)=B/\varepsilon \text{ for } x\leq 0.
\end{equation}
By \cite[Page 337, Lemma 5.2]{demailly1997complex}, $\psi_\varepsilon(\Theta_\mathcal{H})$ is in $C^{\infty}(\mathcal{X}, p^*(\wedge^{1,1}T^*Y))$ and positive. For a point $(t,z)\in \mathcal{X}$, if $\{\xi_1,\ldots, \xi_m\}$ is a  basis of $T^*_t Y$
such that  $\beta_0(t,z)=\sum \xi_j\wedge \bar{\xi}_j$, then
\begin{equation}
\Theta_\mathcal{H}(t,z)=\sum \gamma_j\xi_j\wedge \bar{\xi}_j \text{ and } \psi_\varepsilon(\Theta_\mathcal{H})(t,z)=\sum \psi_\varepsilon(\gamma_j) \xi_j\wedge \bar{\xi}_j.  
\end{equation} 
Therefore, the eigenvalues of $\Theta_\mathcal{H}$ with respect to $\psi_\varepsilon(\Theta_\mathcal{H})$ are $\gamma_j/\psi_{\varepsilon}(\gamma_j)$.  For $1\leq j\leq m$, 

\[
\left\{ 
\begin{array}{l}
\text{if } 0\leq \gamma_j, \text{ then } \gamma_j\leq \psi_\varepsilon(\gamma_j) \text{ and } 0\leq \gamma_j/\psi_\varepsilon(\gamma_j)\leq 1. \\
        \text{if } \gamma_j<0, \text{ then } \psi_\varepsilon(\gamma_j)=B/\varepsilon \text{ and } -\varepsilon\leq \gamma_j/(B/\varepsilon)\leq 0.
    \end{array}
\right.
\]
All in all, $-\varepsilon\leq \gamma_j/\psi_\varepsilon(\gamma_j)\leq 1$ for $1\leq j \leq m$. On the other hand, for $m-k+1\leq j\leq m$, we have $A\leq \gamma_j$, hence $\psi_\varepsilon(\gamma_j)=\gamma_j$ and $\gamma_j/\psi_\varepsilon(\gamma_j)=1$.

As a result, the sum of any $m-k+1$ eigenvalues of $\Theta_\mathcal{H}$
 with resect to $\psi_\varepsilon(\Theta_\mathcal{H})$ is at least $1-\varepsilon(m-k)=\varepsilon>0$ as we chose $\varepsilon=1/(m-k+1)$. By setting $\beta=\psi_\varepsilon(\Theta_\mathcal{H})$, the proof is complete.

For the moreover part, it is clear  that, when $k=1$, statement C can be rephrased as $\Theta_\mathcal{H}\wedge \beta^{m-1}>0$. For the equivalence to $\Theta^{n+1}\wedge \beta^{m-1}>0$, we will use the following formula:
\begin{equation}\label{wzw formula}
\Theta^{n+1}\wedge\beta^{m-1}
  =(n+1)! (m-1)!
 \sum_{i,j} \beta^{i\bar{j}}\det M(i\bar{j})
   \det(\beta)
  \big(\bigwedge^m_{k=1} i dt_k\wedge d\Bar{t}_k\wedge \bigwedge^n_{\lambda=1} i dz_\lambda\wedge d\Bar{z}_\lambda\big). 
\end{equation}
Here $M(i\bar{j})$ for fixed $i$ and $j$ is a matrix defined by
\begin{equation}\label{matrix}
M(i\bar{j}):=\left (
\begin{array}{cccc}
\phi_{t_i\bar{t}_j}
& \phi_{t_i\bar{z}_1} & \cdots &  \phi_{t_i\Bar{z}_n}\\
\phi_{z_1\bar{t}_j} & \phi_{z_1\bar{z}_1} & \cdots & \phi_{z_1\bar{z}_n}\\
 \vdots & \vdots & \ddots & \vdots \\
\phi_{z_n\bar{t}_j}& \phi_{z_n\bar{z}_1} &\cdots & \phi_{z_n\bar{z}_n} 
\end{array}
\right )
\end{equation}
with respect to local coordinates $(t_1,\dots,t_m)$ in $Y$ and local coordinates $(t_1,\dots,t_m,z_1,\dots, z_n)$ in $\mathcal{X}$, and $\phi$ is a local weight for the metric $h$, namely $h=e^{-\phi}$. Formula (\ref{wzw formula}) is a variant of formula (3) in \cite{wu2025mean}. To prove formula (\ref{wzw formula}), it suffices to consider a fixed point $(t,z)$ in $\mathcal{X}$. Let $\alpha$ be a Hermitian metric on $Y$ such that $p^*\alpha=\beta$ at $(t,z)$. We then apply formula (3) in \cite{wu2025mean} to deduce formula (\ref{wzw formula}). (Formula (\ref{wzw formula}) is related to the Wess--Zumino--Witten equation, see \cite{wu23,wu2024potential,finski2024WZW,finski2024lower}).

According to Schur's formula, we have
\begin{equation}\label{Schur}
 \phi_{i\bar{j}}-\sum_{\lambda,\mu} \phi_{i\bar{\mu}}\phi^{\lambda \bar{\mu}}\phi_{\lambda \bar{j}}=\frac{\det M(i\bar{j})}{\det (\phi_{\lambda\bar{\mu}})}.
\end{equation}
A reminder on the notation: $M(i\bar{j})$ for fixed $i$ and $j$ is itself a matrix given in (\ref{matrix}), but $(\phi_{\lambda\bar{\mu}})$ is a matrix with $(\lambda,\mu)$ entry equal to $\phi_{\lambda\bar{\mu}}$. By formula (\ref{wzw formula}), the sign of $\Theta^{n+1}\wedge \beta^{m-1}$ is decided by $\sum_{i,j}\beta^{i\bar{j}}\det M(i\bar{j})$ which has the same sign with 
\begin{equation}\label{sign}
    \sum_{i,j}\beta^{i\bar{j}} (\phi_{i\bar{j}}-\sum_{\lambda,\mu} \phi_{i\bar{\mu}}\phi^{\lambda \bar{\mu}}\phi_{\lambda \bar{j}})
\end{equation}
by formula (\ref{Schur}) and the fact $(\phi_{\lambda\bar{\mu}})$ is positive definite. But the sign of  $\Theta_{\mathcal{H}}\wedge \beta^{m-1}$ is also decided by (\ref{sign}) 
according to (\ref{horizontal}). All together, $\Theta^{n+1}\wedge \beta^{m-1}>0$ if and only if $\Theta_{\mathcal{H}}\wedge \beta^{m-1}>0$. \end{proof}

The case we care about most in this paper is when $k=1$ in Lemma \ref{lemma abc} because it corresponds to weak RC-positivity. The difficulty in proving a theorem like Theorem \ref{Thm for E} or Theorem \ref{Thm 1} for weak RC-positivity is that the $\beta$ in Lemma \ref{lemma abc} is on $\mathcal{X}$, so it does not quite fit into Berndtsson's computation, especially formula (\ref{4.11}).
So, we raise the question:
\begin{question}
  Is it possible to choose $\beta$ in Lemma \ref{lemma abc} so that $\beta=p^*\alpha$ for some Hermitian metric $\alpha$ on $Y$?    
\end{question}
This question is somewhat bold because if it is possible to choose $\beta=p^*\alpha$, then we can use \cite[Theorem 4]{wu2025mean} to deduce that if $E$ is weakly RC-positive, then $S^k\otimes \det E$ has positive mean curvature for $k\geq 0$. Moreover, it is even possible to use \cite[Theorem 5]{wu2025mean} to deduce that if $E$ is weakly RC-positive, then $E$ has positive mean curvature. Since positive mean curvature implies RC-positivity (\cite[Theorem 3.6]{YangCamb}), this would mean that RC-positivity, weak RC-positivity, and mean curvature positivity are all equivalent. Such an equivalence is conjectured for tangent bundle $TX$ in
\cite[Problems 4.15 and 4.17]{YangForum}.

%In order to make a Hermitian metric on $X$ out of $\psi_\varepsilon(\Theta_H)$, we consider 
%\begin{equation}
%\alpha_\varepsilon:= \sum   \frac{\int_{P(E_t^*)}\psi_\varepsilon(\gamma_j)\omega^{r-1}_t}{\int_{P(E_t^*)}\omega^{r-1}_t}  \xi_j\wedge \bar{\xi}_j,
%\end{equation}
%where $\omega_t$ is $\Theta|_{P(E^*_t)}$. Then $\alpha_\varepsilon$ is a Hermitian metric on $X$, and the eigenvalues of $\Theta_H$ with respect to $\alpha_\varepsilon$ are $\gamma_j/\int_{P(E_t^*)}\psi_\varepsilon(\gamma_j)\omega^{r-1}_t$.  

\bibliographystyle{amsalpha}
\bibliography{Dominion}

\textsc{Erdős Center, HUN-REN Rényi Institute, Reáltanoda utca 14, H-1053, Budapest, Hungary}

\texttt{\textbf{wuuuruuu@gmail.com}}

\end{document}